\newtheorem{thm}{Theorem}[section]
\newtheorem{lem}[thm]{Lemma}
\newtheorem{defin}[thm]{Definition}
\newtheorem{rem}[thm]{Remark}
\newtheorem{prop}[thm]{Proposition}
\def\var{\ensuremath{\text{var}}}
\def\osc{\ensuremath{\text{osc}}}
\DeclareMathOperator*{\esssup}{ess\,sup}
\DeclareMathOperator*{\essinf}{ess\,inf}
\begin{document}
\title[Singular hyperbolic attractors are statistical stable]{Singular hyperbolic attractors are statistical stable}

\author[M. Fanaee]{Mohammad Fanaee}
\address{Instituto de Matem\'atica e Estat\'istica, Universidade Federal Fluminense, Brasil }
\email{mfanaee@gmail.com}

\author[M. Soufi]{Mohammad Soufi}
\address{Instituto de Matem\'atica e Estat\'istica, Universidade do Estado do Rio de Janeiro, Brasil }
\email{msoufin@gmail.com}

\date{}

\begin{abstract}
We prove that the unique SRB measure for a singular hyperbolic attractor depends continuously on the dynamics in the weak$^\ast$ topology.   
\end{abstract}

\subjclass[2010]{Primary 37C10; Secondary 37C40, 37D45}

\keywords{Lorenz attractor, Lorenz map, Poincar\'e section, SRB measure, Statistical stability}

\maketitle

\section{Introduction}
A common agreement on the definition of chaos is the \emph{sensitive dependence on initial
conditions}. That means independent of how close two initial conditions are, by letting
the system to proceed for a while, the new resulting states of the system are significantly
different. In other words, a small error at a starting point will cause a huge difference in the
outcome of the system. Since measuring a starting point can not be done accurately, the
orbit of states is quite unpredictable. But statistically there is a hope to make a prediction
by measuring an observable along orbits of the system. Despite of the alteration of the
observable along an orbit, its time average for typical points converges to a constant which
is the space average. This is due to the existence of an \emph{SRB (Physical) measure}. Now,
an interesting question is if the space average depends sensitively on system, i.e., if the
statistical behaviour is stable under the small perturbation of a system?\\ 

The chaos may happen when a system exhibits \emph{hyperbolicity}. The hyperbolicity can be characterize by existence of the expanding and contracting directions for the derivative map. This provides some local information about the dynamics, in a way that, in a small neighborhood of each point there are the local stable and local unstable sets, such that points on them by iteration get close or move away form each other with a uniform rate respectively. A weaker notion of hyperbolicity called \emph{Singular hyperbolicity} has been introduced in \cite{morales-pacifico-pujals04}. It is founded on C$^1$-robust transitive sets with a finite number of singularities of a flow on a compact three-manifold. The \emph{Lorenz attractor} is an example of a singular hyperbolic attractor. In \cite{lorenz63} Lorenz studied a simple model of a system of differential equations in $\mathbb{R}^3$ and showed numerically that its solution has sensitive dependence on the initial conditions near an attractor. To understand the structure of the Lorenz attractor, Williams and Guckenheimer \cite{guckenheimer-williams79} abstracted some geometric properties from the structure of the Lorenz attractor and defined the \emph{geometric Lorenz attractor}: the first example of robust transitive attractor containing a hyperbolic singularity. The singularity is accumulated by regular orbits which prevent the attractor to be (uniform) hyperbolic.

In \cite{araujo-pacifico-pujals-viana09}, it is shown that singular hyperbolic attractors are chaotic, i.e., they are sensitive dependent on initial conditions, and support an unique SRB measure. The strategy is to study the continuous dynamic through a discrete dynamic using a family of Poincar\'e sections foliated by stable manifolds in the trapping neighborhood of the attractor. Then the dynamic of the Poincar\'e map can be studied through a one-dimensional piecewise expanding map obtained by collapsing to a point the stable leaves in each Poincar\'e section. The dynamic of a piecewise expanding one-dimensional map is well understood. Under the condition of the bounded variation \cite{hofbauer-keller82} or H\"older continuity \cite{keller85} of inverse of its derivative, the map has an absolutely continuous invariant measure. This measure is used to construct an invariant SRB measure for Poincar\'e map that can be saturated along the flow in order to have an invariant SRB measure supported on the attractor. Even though it not possible to predict the long-term behavior of a trajectory, the existence of the SRB measure implies that the time average of any observable along the trajectory of almost every point converges to its space average (statistical prediction). In this work, we show that a ``small perturbation" of a vector field supporting a singular hyperbolic attractor may not cause a ``big change" in the space average of an observable (statistical stability).\\\\
\textbf{Main Theorem}. Singular hyperbolicity is statistical stable: the unique SRB measure of any singular hyperbolic attractor varies continuously in weak$^\ast$ topology.\\\\
Note that to prove the Main Theorem, it suffices to prove the same statement for SRB measure of deduced one dimensional map mentioned above. indeed, the rest follows the proofs of \cite{alves-soufi14} lifting this result to Poincar\'e section and then to flow.

\section{Singular hyperbolic attractor}
For more details on this chapter see \cite{araujo-pacifico-pujals-viana09}, \cite{keller85}, and \cite{arroyo-pujals07}  .
Let $X$ be a $C^1$ vector field on a closed three dimensional manifold $M$. 

\begin{defin}
A compact invariant set $\Lambda$ of $X$ which contains some singularities of the vector field is called a \emph{singular hyperbolic set} if
\begin{enumerate}
\item $\Lambda$ is \emph{partially hyperbolic} set: the vector bundle over $\Lambda$ can be decompose to the invariant \emph{stable direction} $E^s$ of dimension one and invariant \emph{central unstable direction} $E^{cu}$ of dimension two, i.e. $T_{\Lambda}M=E^s\oplus E^{cu}$
and
$$DX_t(E_x^{cu})\subset E_{X_t(x)}^{cu}\quad\text{and}\quad DX_t(E_x^{s})\subset E_{X_t(x)}^{s}$$
such that for every $t>0$ and $x\in \Lambda$, $$\|DX_t|_{E^s_x}\|<c\lambda^t\quad\text{and}\quad\|DX_t|_{E^s_x}\|.\|DX_{-t}|_{E^{cu}_{X_t(x)}}\|< c\lambda^t$$
where $c>0$ and $\lambda\in(0,1)$.
\item $\Lambda$ is volume expanding in the central unstable direction $E^{cu}$:
$$\left|det(DX_t|_{E^{cu}_x})\right|\geq ce^{-\lambda t}\quad\text{for every } t>0.$$
\item All the singularities of $\Lambda$ are hyperbolic.
\end{enumerate}

\end{defin}

\begin{defin}
A transitive compact set $\Lambda$ is called an attractor if $$\Lambda=\bigcap_{t\geq 0}X_t(U),$$
for some open set $U$ such that $\overline{X_t(U)}\subset U$, for any $t>0$. The set $U$ is called the trapping neighborhood of $\Lambda$. The set $\Lambda$ is a repeller if it is an attractor for $-X$.
\end{defin}

\begin{thm}
\cite{morales-pacifico-pujals04} Any C$^1$-robustly transitive set with finite number of singularities on a closed 3-manifold is singular hyperbolic. Moreover,
\begin{enumerate}
\item It is either a proper attractor or a proper repeller.
\item Eigenvalues at all singularities satisfy the same inequalities as the in the Lorenz geometrical model.
\end{enumerate}
\end{thm}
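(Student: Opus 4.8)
\medskip
\noindent\textbf{Proof proposal.}
The plan is to use $C^1$-robust transitivity as a rigidity constraint: any feature of $\Lambda$ that could be destroyed by an arbitrarily small $C^1$ perturbation of $X$ cannot be essential, because no nearby field is allowed to create a sink or a source (an attracting or repelling open region would trap orbits and violate transitivity). As a first reduction, robustness lets me assume that all periodic orbits and all singularities in $\Lambda$ are hyperbolic; in particular the finitely many singularities are hyperbolic, which is condition (3) in the definition of a singular hyperbolic set.

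The core step is to produce the dominated splitting $T_\Lambda M = E^s\oplus E^{cu}$ with $\dim E^s=1$. Following the Ma\~n\'e--Bonatti--D\'iaz--Pujals approach, I would argue by contradiction. If the regular part of $\Lambda$ carried no dominated splitting of index one, then the ergodic closing lemma would produce a periodic orbit through a point arbitrarily close to $\Lambda$ whose linear Poincar\'e flow has contracting and expanding rates that are too weakly separated, or a pair of non-real eigenvalues; Franks' lemma then realizes a nearby $C^1$ vector field for which that orbit becomes a sink or a source. This contradicts robust transitivity and forces the splitting. A further perturbation argument along periodic orbits, using the index of the splitting, upgrades it to \emph{partial} hyperbolicity with $E^s$ uniformly contracted and $E^{cu}$ volume expanding, giving conditions (1) and (2) of the definition; here the flow direction sits inside the two-dimensional bundle $E^{cu}$, which is why the genuinely expanding behaviour must be read off the area rather than a line field.

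The delicate point, and where I expect the main obstacle, is reconciling this splitting with the singularities, since at a singularity $\sigma$ the flow direction degenerates and the flow-invariant bundle $E^{cu}$ on $\Lambda\setminus\{\text{singularities}\}$ must converge to a genuine $DX(\sigma)$-invariant plane. Domination rules out complex eigenvalues at $\sigma$ and orders the real eigenvalues as $\lambda_{ss}<\lambda_{s}<0<\lambda_{u}$, identifying $E^s_\sigma$ with the strong-stable eigenline ($\lambda_{ss}$) and $E^{cu}_\sigma$ with the plane spanned by the $\lambda_s$- and $\lambda_u$-eigenvectors. The volume-expansion requirement (2) on $E^{cu}$, evaluated at $\sigma$, then reads $\lambda_s+\lambda_u>0$, i.e. $\lambda_u>-\lambda_s$, which is exactly the eigenvalue inequality of the Lorenz geometric model. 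Thus conclusion (2) of the theorem is not an additional hypothesis but a consequence of partial hyperbolicity together with volume expansion; the technical work is to make the convergence of $E^{cu}$ to this plane, and hence the continuity of the splitting through $\sigma$, rigorous.

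Finally, the attractor/repeller dichotomy (conclusion (1)) follows from the orientation of the splitting at these Lorenz-like singularities. The uniformly contracted direction $E^s$ makes the maximal invariant set inside a small tubular neighborhood of $\Lambda$ coincide with $\Lambda$ itself, so there is a neighborhood $U$ with $\Lambda=\bigcap_{t\ge 0}X_t(U)$ and $\overline{X_t(U)}\subset U$ for $t>0$, exhibiting $\Lambda$ as a \emph{proper} attractor in the sense of the definition; applying the same argument to $-X$ when the expanding bundle is the contracted one yields a proper repeller. Transitivity prevents both from happening at once, completing the dichotomy.
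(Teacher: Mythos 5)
First, a point of order: the paper does not prove this theorem at all --- it is quoted, with citation, from \cite{morales-pacifico-pujals04}, so your sketch has to be measured against the Morales--Pacifico--Pujals proof rather than against anything in this text. Measured that way, your proposal gets the toolbox right (Franks-type perturbations, closing/connecting lemmas, robust transitivity as a rigidity constraint, and the correct observation that volume expansion of $E^{cu}$ evaluated at a singularity $\sigma$ with eigenvalues $\lambda_{ss}<\lambda_s<0<\lambda_u$ yields exactly the Lorenz inequality $\lambda_s+\lambda_u>0$), but it inverts the logical order of the actual proof, and the step where this matters is broken. You derive conclusion (1), the attractor/repeller dichotomy, \emph{from} the splitting, via the claim that a uniformly contracted $E^s$ forces $\Lambda$ to coincide with the maximal invariant set of a small neighborhood and hence to be an attractor. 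That implication is false: a hyperbolic saddle periodic orbit has a uniformly contracted stable bundle and is not an attractor. Being an attractor requires $\Lambda$ to contain the unstable manifolds of its points, and that containment is precisely what must be proven; it does not follow from partial hyperbolicity. In the actual proof the dichotomy comes \emph{first}: one shows every singularity of a robustly transitive set is Lorenz-like for $X$ or for $-X$ (if the eigenvalue inequality failed or eigenvalues were non-real, Hayashi's connecting lemma plus a Franks-type perturbation would create a sink or source, contradicting robust transitivity), then shows all singularities must be of the same type, which yields Theorem A; singular hyperbolicity is then established only for attractors, and its proof uses the attractor structure in an essential way.

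The second gap is that your core step --- producing the dominated splitting via the ergodic closing lemma --- treats the flow as if it were a diffeomorphism. For \emph{nonsingular} robustly transitive sets of 3-flows this is Doering's theorem and your outline is essentially correct. But the entire difficulty of the singular case, and the reason this theorem required a new notion (singular hyperbolicity) rather than a citation to Ma\~n\'e-type results, is that the linear Poincar\'e flow degenerates at singularities: return times blow up, the flow direction collapses, and uniform estimates obtained along periodic orbits do not pass to the closure when that closure contains equilibria. Your sentence ``a further perturbation argument along periodic orbits upgrades it to partial hyperbolicity with volume expansion'' conceals exactly this analysis, which occupies most of the original paper. So the proposal is a reasonable road map of the ingredients, but with conclusion (1) unproven (and derived from a false implication) and the singular degeneration unaddressed, it does not constitute a proof.
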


\begin{defin}
An invariant probability measure $\mu$ is a physical probability measure for the flow if the basin of $\mu$:
$$B(\mu)=\left\{x:~\lim_{T\rightarrow+\infty}\frac{1}{T}\int_0^T\phi(X^t(x))~dx=\int\phi~d\mu,~for~all~continuos~\phi:M\rightarrow\mathbb R\right\}$$
has positive Lebesgue measure.
\end{defin}
\begin{thm}
\cite{araujo-pacifico-pujals-viana09} Singular hyperbolic attractors support a unique physical probability measure which is ergodic and its basin covers a full Lebesgue measure subset of topological basin of attraction.
\end{thm}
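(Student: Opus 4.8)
The plan is to reduce the existence, uniqueness and ergodicity of the physical measure for the flow to the corresponding statements for a one-dimensional piecewise expanding map, along the lines sketched in the introduction. First I would fix a trapping neighborhood $U$ of $\Lambda$ and construct a global cross-section, a finite union $\Xi$ of smooth surfaces transverse to $X$, such that every orbit in $U$ that does not converge to a singularity meets $\Xi$ within a return time bounded above, after deleting small neighborhoods of the singularities where the return time is unbounded. On $\Xi$ the strong stable direction $E^s$ integrates to a contracting foliation $\mathcal{F}^s$; the key point, inherited from partial hyperbolicity (Definition 2.1(1)) together with the lower bound on return times away from the singularities, is that the Poincar\'e return map $R\colon \Xi\to\Xi$ leaves $\mathcal{F}^s$ invariant and contracts its leaves at a uniform exponential rate.

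Next I would quotient each cross-section by its stable leaves. Since $R$ contracts $\mathcal{F}^s$ and is defined off a finite set of discontinuity curves (the leaves through the singularities), it descends to a map $f$ of the leaf space, which is a finite union of intervals. The volume expansion of the central-unstable bundle (Definition 2.1(2)), transported to $\Xi$ and combined with the stable contraction, makes $f$ piecewise $C^{1+\alpha}$ and \emph{expanding}, with $|f'|>1$ away from the discontinuities and the expansion blowing up as one approaches them. From this one verifies that $1/|f'|$ has bounded variation (equivalently is H\"older), so the results of Hofbauer--Keller \cite{hofbauer-keller82} and Keller \cite{keller85} provide an absolutely continuous invariant probability measure $\nu$ for $f$; transitivity of $\Lambda$ makes $f$ topologically transitive, whence $\nu$ is the unique such measure and is ergodic.

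To return to the flow I would lift $\nu$ back through the quotient. Disintegrating a reference measure along the unstable direction and pushing $\nu$ along the contracting leaves yields an $R$-invariant measure $\mu_R$ on $\Xi$ whose conditional measures along central-unstable leaves are absolutely continuous, which is exactly the SRB property for the Poincar\'e map. Suspending $\mu_R$ over the return-time function produces an $X_t$-invariant probability measure $\mu$ on $\Lambda$. A Fubini argument then shows that the basin of $\mu$ has full Lebesgue measure in the topological basin of attraction: Lebesgue-almost every point of $U$ eventually enters $\Xi$, its forward time averages are governed by $R$ and hence by $f$, and the ergodicity of $\nu$ propagates to $\mu$; uniqueness of $\mu$ follows from that of $\nu$.

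The main obstacle is the behaviour near the singularities. There the return time is unbounded, $f$ acquires genuine discontinuities, and the expansion degenerates or explodes, so controlling the distortion of $f$ and the H\"older regularity of $\mathcal{F}^s$ up to the discontinuity set is delicate. This is precisely where the eigenvalue conditions of Theorem 2.3(2) and the geometric Lorenz structure are needed: they guarantee that $1/|f'|$ remains of bounded variation and that the disintegration along unstable leaves stays absolutely continuous, which is what the one-dimensional theory requires.
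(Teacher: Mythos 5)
Your proposal is essentially the argument this paper attributes to \cite{araujo-pacifico-pujals-viana09}: the paper does not prove this theorem itself but quotes it, and its introduction and Section 2 sketch exactly your route (adapted Poincar\'e sections foliated by stable leaves, hyperbolic return map, quotient along the stable foliation to a piecewise expanding $C^{1+\alpha}$ interval map, the Hofbauer--Keller/Keller theory plus transitivity giving a unique ergodic a.c.i.p.\ $\nu$, then lifting $\nu$ to the section and saturating along the flow over the integrable return time). The only slip worth flagging is your parenthetical ``bounded variation (equivalently is H\"older)'': the two conditions are not equivalent --- H\"older continuity with exponent $\alpha$ implies bounded $\tfrac{1}{\alpha}$-variation, which is the implication the paper actually uses.
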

We remeber that the topological basin of attraction is the set
$$\left\{x: ~\lim_{t\rightarrow+\infty}dist(X^t(x),\Lambda)=0\right\}.$$

\subsection{Poincar\'e section}
Let $S=\{\sigma_i: 1\leq i\leq s\}$ be the set of all singularities of the singular hyperbolic attractor $\Lambda$. Every singularity $\sigma_i$ is a Lorenz-like singularity, that is, the derivative $DX(\sigma_i)$ has three eigenvalues satisfying 
$$
\lambda_{i,1}>0>\lambda_{i,2}>\lambda_{i,3} \quad\text{and}\quad \lambda_{i,1}+\lambda_{i,2}>0.
$$
Around each singularity $\sigma_i$ we take two input cross-sections $\Sigma_i^{\pm,0}$ at regular points of different components of $W_{loc}^s(\sigma_i)\setminus W_{loc}^{ss}(\sigma_i)$, where $W_{loc}^{ss}(\sigma_i)$ is the stable manifold associated to the strongest contracting eigenvalue, and two output cross-section $\Sigma_i^{\pm,1}$ at regular points of different components of $W_{loc}^u(\sigma_i)\setminus \{\sigma_i\}$. By taking the linear stable and unstable eigenspaces and then local stable and unstable manifolds as coordinates, it can be seen that the local stable and unstable manifolds are contained in  $x_2x_3$-plane 	and $x_1$ axis respectively, see Figure \ref{flowbox} (for more details see \cite[Section 3.2]{wiggins88}). By descaling the coordinate, we may assume that the local stable and unstable manifolds are unit square and unit interval respectively. The cross-sections can be chosen in a way that all the trajectories started at each point on the input cross-section $\Sigma_i^{\pm,0}$ for the first time will reach the output cross-section $\Sigma_i^{\pm,1}$ except for the points on $\Gamma_i=\Sigma_i^{\pm,0}\cap W_{loc}^s(\sigma_i)$ which end at the singularity. The time is needed for the point $(x_1,x_2,x_3)$ on the input cross-section to reach the output cross-section is given by $$t_{\sigma_i}=-\frac{\log |x_1|}{\lambda_{i,1}}.$$

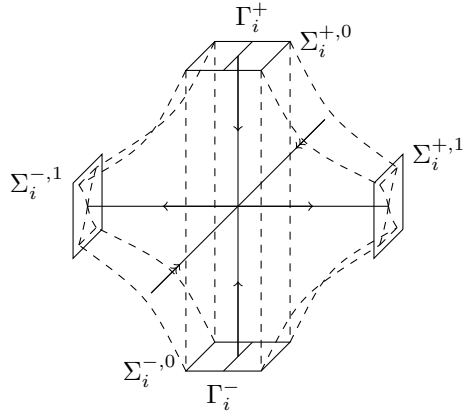
\begin{figure}[ht]
\begin{center}
\begin{tikzpicture}
\draw (-2,0,0)--(2,0,0);
\draw[->] (0,0,0)--(1,0,0);
\draw[->] (0,0,0)--(-1,0,0);
\draw (0,-2,0)--(0,2,0);
\draw[->] (0,-2,0)--(0,-1,0);
\draw[->] (0,2,0)--(0,1,0);
\draw (0,0,-3)--(0,0,3);
\draw[->>] (0,0,-3)--(0,0,-2);
\draw[->>] (0,0,3)--(0,0,2);
\draw (-.5,2,-.5)--(.5,2,-.5)--(.5,2,.5)--(-.5,2,.5)--(-.5,2,-.5);
\node[right] at (.5,2,-.5) {$\Sigma_i^{+,0}$};
\draw (-.5,-2,-.5)--(.5,-2,-.5)--(.5,-2,.5)--(-.5,-2,.5)--(-.5,-2,-.5);
\node[left] at (-.5,-2,.5) {$\Sigma_i^{-,0}$};
\draw (2,-.5,-.5)--(2,.5,-.5)--(2,.5,.5)--(2,-.5,.5)--(2,-.5,-.5);
\node[right] at (2,.5,-.5) {$\Sigma_i^{+,1}$};
\draw (-2,-.5,-.5)--(-2,.5,-.5)--(-2,.5,.5)--(-2,-.5,.5)--(-2,-.5,-.5);
\node[left] at (-2,.5,.5) {$\Sigma_i^{-,1}$};
\draw (0,2,-.5)--(0,2,.5);
\node[above] at (0,2,-.5) {$\Gamma_i^+$};
\draw (0,-2,-.5)--(0,-2,.5);
\node[below] at (0,-2,.5) {$\Gamma_i^-$};
\draw[dashed] (-.5,2,-.5)--(-.5,-2,-.5);
\draw[dashed] (.5,2,-.5)--(.5,-2,-.5);
\draw[dashed] (.5,2,.5)--(.5,-2,.5);
\draw[dashed] (-.5,2,.5)--(-.5,-2,.5);
\draw[dashed] (.5,2,.5) .. controls (1,1,.5) .. (2,.4,.3);
\draw[dashed] (.5,2,-.5) .. controls (1,1,-.5) .. (2,.4,-.3);
\draw[dashed] (2,.4,.3) -- (2,.4,-.3);
\draw[dashed] (2,.4,.3) ..controls (2,.2,.1) .. (2,0,0);
\draw[dashed] (2,.4,-.3) ..controls (2,.2,-.1) .. (2,0,0);
\draw[dashed] (.5,-2,.5) .. controls (1,-1,.5) .. (2,-.4,.3);
\draw[dashed] (.5,-2,-.5) .. controls (1,-1,-.5) .. (2,-.4,-.3);
\draw[dashed] (2,-.4,.3) -- (2,-.4,-.3);
\draw[dashed] (2,-.4,.3) ..controls (2,-.2,.1) .. (2,0,0);
\draw[dashed] (2,-.4,-.3) ..controls (2,-.2,-.1) .. (2,0,0);
\draw[dashed] (-.5,2,.5) .. controls (-1,1,.5) .. (-2,.4,.3);
\draw[dashed] (-.5,2,-.5) .. controls (-1,1,-.5) .. (-2,.4,-.3);
\draw[dashed] (-2,.4,.3) -- (-2,.4,-.3);
\draw[dashed] (-2,.4,.3) ..controls (-2,.2,.1) .. (-2,0,0);
\draw[dashed] (-2,.4,-.3) ..controls (-2,.2,-.1) .. (-2,0,0);
\draw[dashed] (-.5,-2,-.5) .. controls (-1,-1,-.5) .. (-2,-.4,-.3);
\draw[dashed] (-.5,-2,.5) .. controls (-1,-1,.5) .. (-2,-.4,.3);
\draw[dashed] (-2,-.4,.3) -- (-2,-.4,-.3);
\draw[dashed] (-2,-.4,.3) ..controls (-2,-.2,.1) .. (-2,0,0);
\draw[dashed] (-2,-.4,-.3) ..controls (-2,-.2,-.1) .. (-2,0,0);

\end{tikzpicture} 
\end{center}
\caption{flow box containing a singularity}
\label{flowbox}
\end{figure}

Let $\Sigma_x$ be a cross-section at a nonsingular point $x$ of the attractor.  We chose a flow box near each cross-section $\Sigma_x$, i.e. $U_x=\{X^t(\Sigma_x): t\in[-\epsilon,\epsilon]\}$, for some $\epsilon>0$. Around each singularity $\sigma_i$ we can take the flow box $$U_{\sigma_i}=\{X^t(x): t\in [-\epsilon,t_{\sigma_i}(x)+\epsilon]\text{ and } x\in\Sigma_i^{\pm,0}\setminus\Gamma_i\}\cup\Sigma_i^{+,0}\times[-1,1].$$ The attractor has the open cover $(\bigcup\limits_{x\in \Lambda\setminus S}U^\mathrm{o}_x)\cup(\bigcup\limits_{\sigma_i\in S}U^\mathrm{o}_{\sigma_i})$. Since the attractor is compact, it has a finite cover $\bigcup\limits_{j=1}^l U_{x_j}$, where $l\geq s$ and $x_j=\sigma_j$ for $j\in\{1,\ldots,s\}. $
The \emph{global cross-section} or \emph{Poincar\'e section} is the family of the cross-sections $$\Xi=\left\{\Sigma_i^{\pm,0},\Sigma_i^{\pm,1}, \Sigma_{x_j}:i=1,\ldots,s~\text{and}~j=s+1,\ldots,l\right\}.$$

\subsection{Stable foliation}
Note that each cross-section $\Sigma$ in the Poincar\'e section $\Xi$ can be foliated by stable leaves $W^s(x,\Sigma)$ for $x\in\Sigma$ where the leaf $W^s(x,\Sigma)$ is the connected component of $\bigcup_{t\in\mathbb{R}}X^t(W^{s}(x))\cap \Sigma$ that contains the point $x$ for which $W^s(x)$ is the stable manifold of $x$.

\subsection{Return time}
The return time $\tau(x)$ for $x$ on Poincar\'e section is defined to be the first time that $X^t(x)$ hits the global cross-section when $t>t_0$, i.e.
$$\tau(x)=t_0+T(X^{t_0}(x))$$
where $T(x)=\inf\{t>0:X^t(x)\in\Xi\}$ is the first hitting time. The fixed time $t_0$ is big enough in order that the Poincar\'e map become uniformly hyperbolic.

\subsection{Integrability of the return time}
Note that the return time is not defined for some finite leaves on Poincar\'e section which has Lebesgue measure zero. The return time is bounded by $$t_0+\sum\limits_{i=1}^{s}t_{\sigma_i}\mathbb{I}_{X^{-t_0}(U_{\sigma_i})}+\sum\limits_{i=s+1}^{l}2\epsilon\mathbb{I}_{X^{-t_0}(U_{x_i})},$$ where $\mathbb{I}$ is the indicator function. Since the times $t_{\sigma_i}$'s are integrable, the return time is integrable with respect to the Lebesgue measure.

\subsection{Return map}
Now the Poincar\'e return map $R:\Xi \to \Xi$ is defined as
$$R(x)=X^{\tau(x)}(x).$$
In order that the Poincar\'e map leaves invariant the stable foliation on the Poincar\'e section (image of each stable leaf is entirely in another stable leaf), each cross-section at a nonsingular point of the attractor can be chosen to be $\delta$-adapted, that is, the distance of its intersection with the attractor stay out side of the $\delta$ neighborhood of the $cu$-boundary of the cross-section. 
\begin{center}
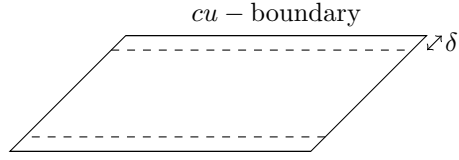
\begin{figure}[h]
\begin{tikzpicture}
\draw (-2,0,-2)--(2,0,-2)--(2,0,2)--(-2,0,2)--(-2,0,-2);
\draw (-2,0,-2)--(2,0,-2);
\node [above] at (0,0,-2) {$cu-\text{boundary}$};
\draw (2,0,2)--(-2,0,2);
\draw [dashed](-2,0,-1.5)--(2,0,-1.5);
\draw [<->](2.2,0,-2)--(2.2,0,-1.5);
\node [right] at (2.2,0,-1.8) {$\delta$};
\draw [dashed](2,0,1.5)--(-2,0,1.5);
\end{tikzpicture}
\caption{$\delta$-adapted cross-section}
\end{figure}
\end{center}
Then a point $x\in \Xi\setminus \partial^s\Xi$ is a discontinuity point of the Poincar\'e map $R$ if $R(x)\in\partial\Xi$ or $X_t(x)\in W^s(\sigma)$ for some $t\in [0,t_0]$ and a singular point $\sigma$. The set of the discontinuities is a finite numbers of the stable leaves on the global cross-section and therefore the domain of smoothness of $R$ is some open strips.

\subsection{Quotient function}
From now on, it is assumed that the flow is of class $C^2$. Under this condition, it can be shown the stable leaves $W^s(x)$ are $C^2$ embedded disks which define a $C^{1+\alpha}$ foliation (see \cite{araujo-melbourne17}). We choose a $C^2$ $cu$-leaf transversal to the stable foliation on each cross-section of Poincar\'e section. By collapsing the stable leave on $cu$-leaves, a $C^{1+\alpha}$ function is obtained (excluding the discontinuity leaves of the return map) whose domain is $C^2$ diffeomorphic to $I\subset\mathbb{R}$ containing a finite union of open intervals. Therefore the projection function $p:\Sigma\to I$ is a $C^{1+\alpha}$ and so is the function 
\begin{align*}
f&:I\to I\\
f(x)&=p(R(p^{-1}(x))).
\end{align*}
Note that the transformation $f$ is piecewise expanding and $\frac{1}{f'}$ is $\alpha$ continuous. As a result it can be proved that the function $f$ has a finite number of absolutely continuous invariant measures whose
basins cover Lebesgue almost all points of $I$ \cite{keller85}. The function $f$ is transitive since the attractor is transitive. Therefore $f$  has a unique absolutely continuous invariant probability measure $\nu$.
We remember that, since the map $f$ is $C^{1+\alpha}$ and each H\"older continuous function with H\"older exponent $\alpha\in(0,1]$ is bounded $\frac{1}{\alpha}$-variation, the map $\frac{1}{f'}$ is bounded $\frac{1}{\alpha}$-variation. 
A function $f$ is bounded $\frac{1}{\alpha}$-variation if
$$V_{\frac{1}{\alpha}}(f)=\sup\limits_{0\leq x_0<\ldots <x_n\leq 1}\left(\sum_{i=1}^{n}|f(x_i)-f(x_{i-1})|^{\frac{1}{\alpha}}\right)^{\alpha}<+\infty~.$$

\subsection{Liftting SRB measures}
As $f\circ p=p\circ R$ the measurable and topological structures of $f$ can be induced to $R$. In particular, there is an invariant absolutely continuous probability measure on the Poincar\'e section defined by $$\int g~d\gamma=\lim\limits_{n\to\infty}\int\inf\limits_{x\in\zeta}g( R^n(x))~d\nu=\lim\limits_{n\to\infty}\int\sup\limits_{x\in\zeta}g(R^n(x))~d\nu,$$ 
for every continuous and bounded function $g$ on the Poincar\'e section.

By saturating the measure $\hat\mu$ along the flow, an invariant probability measure $\mu$ can be construct supported on the attractor defined by 
$$\int h~d\mu=\frac{1}{\gamma(\tau)}\int\int_0^{\tau(x)}h(X^t(x))~dt~d\gamma(x),$$
for every continuous and bounded function $h$ on a neighborhood of the attractor. The measure $\mu$ is a unique ergodic and SRB measure supported on the attractor.

\subsection{Perturbed singular hyperbolic attractor}
Let $X^t$ be the flow of a $C^2$ vector field $X$ supported a singular hyperbolic attractor $\Lambda$ as a maximal invariant set in a open set $U$. Let $X_\epsilon$ be such a small $C^2$ perturbation of $X$ for which the flow $X^t_\epsilon$ supports a singular hyperbolic attractor $X_\epsilon$ with same number of singularity. 
Since the global Poincar\'e section $\Xi$ is transversal to flow direction and is $\delta$-adapted, choose $\Xi$ as a global Poincar\'e section for the flow of the vector filed $X_\epsilon$:
the Poincar\'e section is foliated by stable leaves and collapsing along them an expanding $C^{1+\alpha}$ function $f_\epsilon:I\to I$ is obtained with a unique absolutely invariant probability measure $\nu_\epsilon$.
Then any map $f_\epsilon$ is transitive and piecewise $C^{1+\alpha}$ expanding with the same number of discontinuity points of $f$. Moreover, there are uniform constants $\beta>1$ and $c>0$ such that

$$(f^n_\epsilon)'(x)\geq\beta^n$$
and

$$V_{\frac{1}{\alpha}}(\frac{1}{f'_\epsilon})\leq c.$$

By constructing an invariant probability measure and saturating it along the flow, an invariant probability measure $\mu_\epsilon$ supported on the attractor $\Lambda_\epsilon$ will be obtained. Our objective here is to compare $\mu$ with $\mu_\epsilon$ and to show that they are close in weak$^*$ topology. Our method is to show that the measures $\nu$ and $\nu_\epsilon$ of the reduced one-dimensional dynamics are close The closeness, by \cite{alves-soufi14}, will be preserved on the Poincar\'e section and the relative attractor.. 

\section{Statistical stability for one-dimensional map}
The transform operator describes well the measurable properties of a dynamics: the invariant measure of the function $f$ is the fixed point of the transform operator acting on a Banach space defined by $$T(g(x))=\sum\limits_{f(y)=x}\frac{g(y)}{|\text{Det } D_yf|}.$$ The transform operator acting on $L^1$ space may not have spectral gap. We should look for an appropriate Banach  subspace of $L^1$ with a norm stronger than $L^1$-norm such that the action of the transform operator on this Banach space has spectral gap. Note that when an operator is quasi-compact, it has spectral gap. A sufficient condition for quasi-compactness is given by Losato-Yorke inequality: 
there are $k\in\mathbb N$ and uniform constants $0<r<1$ and $R>0$ such that
$$||T^kg||\leq r^k||g||+R|||g|||$$

\subsection{Bounded variation space}
For a function $g:I\to\mathbb{R}$ and $\epsilon>0$ the oscillation of $g$ is defined as following
$$\osc(g,\epsilon,x)=\esssup\limits_{y_1,y_2\in B_{\epsilon}(x)}|g(y_1)-g(y_2)|$$
where $B_{\epsilon}(x)$ is the open ball with radius $\epsilon$ centered at $x$. The oscillation is semi-continuous and therefore measurable. We then define $$\osc(g,\epsilon)=\|\osc(g,\epsilon,x)\|_1.$$ 
Fix $\epsilon_0>0$ and, for $n\in\mathbb N$, set  $\mathcal R_{n}=\{g:I\to\mathbb{R}~|~\osc(g,\epsilon)\leq n\epsilon,~\epsilon\in(0,\epsilon_0]\}$ and $\mathcal R=\bigcup\limits_{n\in\mathbb{N}}\mathcal R_{n}$. 
It can be shown that $\mathcal R$ is dense in $(L_1,\|.\|_1)$. $\mathcal B \mathcal V$ is the space of equivalence classes of functions in $\mathcal R$. 
Define 
$$\var_\alpha(g)=\sup\limits_{0<\epsilon\leq\epsilon_0}\frac{\osc(g,\epsilon)}{\epsilon^\alpha}$$ 
and
$$||g||_{\alpha,1}=\var_\alpha(g)+\|g\|_1.$$ 
The space $(\mathcal B \mathcal V,||.||_{\alpha,1})$ is a Banach space with the compact embedding into $L^1$.

we must show that all constants are uniform, that is, they do not depend on $f$.
\begin{lem}
For any $g\in \mathcal B \mathcal V$ and any $J\subset I$ with $|J|\geq\epsilon_0$, we have
$$\osc(g.1_J,\epsilon)\leq 2~\int_J\osc(g|_J,\epsilon,x)~dx+\frac{4\epsilon}{|J|}~\int_J|g(x)|~dx.$$
\end{lem}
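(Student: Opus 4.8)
The plan is to estimate the integrand $\osc(g\cdot 1_J,\epsilon,x)$ pointwise, separating $x$ according to how the ball $B_\epsilon(x)$ sits relative to $J$, and then integrate. Throughout, $J$ is one of the branch intervals, say $J=(a,b)$, so its boundary is the two points $a,b$. Write $h=g\cdot 1_J$ and split $I$ into the interior $J^-_\epsilon=\{x:B_\epsilon(x)\subset J\}$, the exterior $\{x:B_\epsilon(x)\cap J=\emptyset\}$, and the boundary strip $S=\{x:B_\epsilon(x)\cap J\neq\emptyset\ \text{and}\ B_\epsilon(x)\not\subset J\}$. Because $J$ has exactly the two endpoints $a,b$, each contributing an interval of length $2\epsilon$ of such $x$, we have $|S|\le 4\epsilon$; note also that $\epsilon\le\epsilon_0\le|J|$, so this strip is genuinely thinner than $J$.

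On the exterior $h\equiv0$ near $x$, so $\osc(h,\epsilon,x)=0$; on the interior $h=g$ near $x$, so $\osc(h,\epsilon,x)=\osc(g|_J,\epsilon,x)$ and the interior contributes at most $\int_J\osc(g|_J,\epsilon,x)\,dx$. The work is on $S$. There the ball meets both $J$ and its complement, so $h$ attains the value $0$ on $B_\epsilon(x)$; writing the oscillation as $\esssup_{B_\epsilon(x)}h-\essinf_{B_\epsilon(x)}h$ and inserting this zero, a short case check according to the sign of $g$ on $B_\epsilon(x)\cap J$ gives the clean pointwise bound
$$\osc(h,\epsilon,x)\le\osc(g|_J,\epsilon,x)+\essinf_{y\in B_\epsilon(x)\cap J}|g(y)|,\qquad x\in S.$$
Integrating the first summand over $S$ and combining it with the interior term is where the coefficient $2$ in front of $\int_J\osc(g|_J,\epsilon,x)\,dx$ has to be produced.

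The heart of the matter is the second summand: I must bound $\int_S\essinf_{y\in B_\epsilon(x)\cap J}|g(y)|\,dx$ by the $J$-average $\frac{1}{|J|}\int_J|g|$ times the width $|S|$. Since an essential infimum over a window hugging the boundary of $J$ need not be comparable to the average of $|g|$ over all of $J$, the plan is to compare a near-boundary value with a reference value: by averaging, the set where $|g|\le\frac{1}{|J|}\int_J|g|$ has positive measure inside $J$, and any near-boundary value is reached from such a reference point by a chain of overlapping $\epsilon$-balls, each link costing one pointwise oscillation $\osc(g|_J,\epsilon,\cdot)$. Converting this chain of at most $\sim|J|/\epsilon$ links into the integral $\frac{1}{\epsilon}\int_J\osc(g|_J,\epsilon,x)\,dx$ and multiplying by $|S|\le4\epsilon$ yields a term of the required shape $\frac{4\epsilon}{|J|}\int_J|g|$ together with a residual multiple of $\int_J\osc(g|_J,\epsilon,x)\,dx$.

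The main obstacle is precisely the bookkeeping of constants in this last step: the naive chaining estimate produces a multiple of $\int_J\osc(g|_J,\epsilon,x)\,dx$ that is too large to fit under the coefficient $2$, so the comparison between near-boundary values and the $J$-average must be carried out in a localized, scale-$\epsilon$ fashion rather than through the global variation of $g$ across $J$. Getting the oscillation contributions coming from the strip $S$, from the chaining, and from the interior to add up to at most $2\int_J\osc(g|_J,\epsilon,x)\,dx$, while the infimum part contributes exactly $\frac{4\epsilon}{|J|}\int_J|g|$, is the delicate accounting that fixes the constants $2$ and $4$ in the statement. The hypothesis $|J|\ge\epsilon_0$ is used throughout to guarantee that $J$ is wide enough both to house a positive-measure reference set and to make the chain meaningful.
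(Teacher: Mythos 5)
Your overall strategy coincides with the paper's: split $I$ into the $\epsilon$-interior of $J$, the exterior, and two boundary strips of total measure at most $4\epsilon$, and on the strips compare near-boundary values of $g$ with the average $g_0=\frac{1}{|J|}\int_J g$ by a chain of $\epsilon$-balls, each link costing one pointwise oscillation. Your pointwise reduction on the strip, $\osc(g\cdot 1_J,\epsilon,x)\le\osc(g|_J,\epsilon,x)+\essinf_{y\in B_\epsilon(x)\cap J}|g(y)|$, is correct (and in fact slightly cleaner than the paper's version, which works with an $\esssup$). The problem is that your argument stops exactly at the decisive step and defers it as ``delicate accounting'': the route you actually sketch provably cannot reach the constant $2$. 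Bounding the chain sum by $\frac{1}{\epsilon}\int_J\osc(g|_J,\epsilon,x)\,dx$ uniformly in $x\in S$ (this much can indeed be justified, e.g.\ by subadditivity of the oscillation over overlapping balls plus averaging) and then multiplying by $|S|\le 4\epsilon$ yields \emph{four} copies of $\int_J\osc(g|_J,\epsilon,x)\,dx$ from the strips, plus one from the interior --- a coefficient of about $5$, not $2$. You acknowledge this, but acknowledging that the constants do not yet fit is not a proof of a lemma whose entire content is those constants.

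The missing device --- which is precisely the key computation in the paper's proof --- is to translate the chain rigidly together with the point being estimated and to \emph{integrate} over the strip instead of taking a supremum and multiplying by $|S|$. Parametrize the strip at the left endpoint $a$ of $J$ by $x=a+z$ with $|z|<\epsilon$, and for this $x$ use the chain points $a+2i\epsilon+z$, $i=1,\dots,k$, where $k$ is chosen with $(2k+1)\epsilon\le|J|$ (possible since $\epsilon\le\epsilon_0\le|J|$) so that the terminal ball is a reference ball on which $\essinf g\le g_0\le\esssup g$. Since the links are $2\epsilon$-separated, distinct links sweep pairwise disjoint subintervals of $J$ as $z$ varies, so by a change of variables in each term
\[
\int_{-\epsilon}^{\epsilon}\sum_{i=1}^{k}\osc\bigl(g|_J,\epsilon,a+2i\epsilon+z\bigr)\,dz
=\sum_{i=1}^{k}\int_{a+(2i-1)\epsilon}^{a+(2i+1)\epsilon}\osc(g|_J,\epsilon,u)\,du
\le\int_J\osc(g|_J,\epsilon,u)\,du .
\]
Thus the strip-integrated chain costs exactly \emph{one} copy of $\int_J\osc(g|_J,\epsilon,x)\,dx$, at the original scale $\epsilon$, rather than the four copies produced by your sup-and-multiply accounting; together with the one copy from the interior this gives the coefficient $2$, while the reference term contributes at most $4\epsilon|g_0|\le\frac{4\epsilon}{|J|}\int_J|g(x)|\,dx$. (To keep both endpoint strips inside that single copy one should run both chains toward a common reference ball, so that they sweep disjoint parts of $J$; the paper glosses over this point, but it is needed for the constant $2$.) Without this averaging-over-translates argument, or an equivalent substitute, your proposal does not prove the lemma as stated.
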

\begin{proof}
Let $I=[a_1,a_2]$. We observe the following
$$\osc(g.1_J,\epsilon,x)=H_1+\osc(g|_J,\epsilon,x).1_{J_0}(x)+H_2$$
where $H_i=\text{max} \left\{\esssup\limits_{y\in B_\epsilon(x)\cap J_i}|g(y)|.1_{J_i}(x),\osc(g|_J,\epsilon,x).1_{J_i}(x)\right\}$, $J_1=B_\epsilon(a_1)$, $J_2=B_\epsilon(a_2)$ and $J_0=I\setminus J_1\cup J_2$.\\
There are two cases:\\
i) If $H_i=\osc(g|_J,\epsilon,x).1_{J_i}(x)$, for $i=1,2$,  then it is done.\\\\
ii) If for some $i$, $H_i=\esssup_{y\in B_\epsilon(x)\cap J_i}|g(y)|.1_{J_i}(x)$.\\
Then there exists some $k\in\{1,...,n-1\}$ such that
$$\essinf_{y\in B_\epsilon (a_1+2k\epsilon+z)}g(y)\leq g_0\leq\esssup_{y\in B_\epsilon(a_1+2k\epsilon+z)}g(y)$$
for $g_0=\frac{1}{m(J)}\int_Jg(x)dx$.
Since, for any $y\in B_\epsilon(x)\cap J$,
$$|g(y) - g_0|\leq |g(y)-g(a_1+z)|+|g(a_1+z)-g(a_1+2\epsilon+z)|+...$$
$$+|g(a_1+2(k-1)\epsilon+z)-g(a_1+2k\epsilon+z)|+|g(a_1+2k\epsilon+z)-g_0|$$
$$\leq\sum_{i=1}^{k}\osc(g|_J,\epsilon,a_1+2i\epsilon+z)$$

then
$$\osc(g.1_J,\epsilon)\leq \int\osc(g|_J,\epsilon,x).1_{J_0}(x)+4\epsilon|g_0|+\int_{-\epsilon}^{\epsilon}\sum_{i=1}^{k}\osc(g|_J,\epsilon,a_1+2i\epsilon+z)dx$$
$$\leq 2\int\osc(g|_J,\epsilon,x)~dx+\frac{4\epsilon}{|J|}\int|g(x)|dx$$
\end{proof}

The next proposition, as a direct consequence of the last lemma, approximates the variation of a transform operator by variation of the base transformation.
\begin{prop}
There are uniform positive constants $c_1$ and $c_2$ such that for any $g\in \mathcal B \mathcal V$,
$$\var_\alpha(T g)\leq c_1~\var_\alpha(g)+c_2~||g||_1.$$
\end{prop}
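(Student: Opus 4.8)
The plan is to derive this proposition directly from the preceding Lemma by analyzing how the transfer operator $T$ distributes oscillation across the finitely many branches of $f$. First I would recall that on each domain of smoothness of $f$ we have a diffeomorphism onto its image, so that $Tg$ can be written as a finite sum $Tg(x) = \sum_j (g \cdot \psi_j) \circ h_j(x) \cdot 1_{f(J_j)}(x)$, where the $J_j$ are the monotonicity intervals, $h_j = (f|_{J_j})^{-1}$, and $\psi_j = 1/|f'|$ is the weight. Because oscillation is subadditive, it suffices to control $\osc\bigl((g\,\psi_j)\circ h_j \cdot 1_{f(J_j)}, \epsilon\bigr)$ on each branch and sum. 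The key geometric input is that $h_j$ is a contraction with $|h_j'| \le \beta^{-1} < 1$ uniformly (from the expansion bound $(f^n_\epsilon)' \ge \beta^n$), so composing with $h_j$ shrinks the ball $B_\epsilon$ to roughly $B_{\epsilon/\beta}$, and a change of variables converts the resulting integrals back to integrals over $J_j$.

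The next step is to split the oscillation of each branch term using the product rule for oscillation, namely $\osc(g\psi,\epsilon,x) \le \osc(g,\epsilon,x)\,\esssup|\psi| + \esssup|g|\,\osc(\psi,\epsilon,x)$, and to invoke the uniform bound $V_{1/\alpha}(1/f'_\epsilon) \le c$ so that the weight $\psi_j$ contributes only a controlled multiple of $\|g\|_1$. Here the Lemma is exactly what is needed to handle the characteristic function $1_{f(J_j)}$: truncating $g\psi_j$ to the image interval $J = f(J_j)$ creates boundary oscillation near $\partial J$, and the Lemma bounds this by $2\int_J \osc(\cdot|_J,\epsilon,x)\,dx + \frac{4\epsilon}{|J|}\int_J |\cdot|\,dx$, where the second term is absorbed into the $\|g\|_1$ contribution (the hypothesis $|J|\ge \epsilon_0$ must be arranged, which I expect is guaranteed by choosing $\epsilon_0$ small relative to the fixed finite partition). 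After taking the supremum over $0 < \epsilon \le \epsilon_0$ and dividing by $\epsilon^\alpha$, the contraction factor $\beta^{-\alpha}$ appears in front of $\var_\alpha(g)$, yielding $c_1$ as a uniform constant coming from the number of branches and the expansion rate $\beta$, while $c_2$ collects the contributions of $\|g\|_1$ through the variation bound $c$ of the weight and the boundary terms of the Lemma.

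I expect the main obstacle to be bookkeeping the change of variables cleanly while keeping every constant uniform in $\epsilon$ (the perturbation parameter), since this uniformity is the whole point: the constants $c_1, c_2$ must depend only on $\beta$, on $c$, on the fixed number $l$ of smoothness branches, and on $\epsilon_0$, but not on the particular map $f_\epsilon$. The delicate point is the interplay between the scale $\epsilon$ at which oscillation is measured and the images $f(J_j)$ of the branches: under the change of variables $x = f(y)$, a ball $B_\epsilon(x)$ in the image pulls back to a set whose diameter is controlled by $\beta^{-1}\epsilon$ but whose exact shape depends on the nonlinearity of $f$, so I would need the $C^{1+\alpha}$ regularity and the uniform $1/\alpha$-variation bound on $1/f'_\epsilon$ to ensure the distortion is bounded independently of the branch and of $\epsilon$. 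Once the distortion is controlled, summing the $l$ branch estimates and collecting terms gives the stated inequality with uniform $c_1$ and $c_2$.
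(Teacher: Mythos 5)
Your proposal follows essentially the same route as the paper's proof: decompose $Tg$ over the finitely many branches, apply the preceding Lemma to handle the indicator functions $1_{f(I_j)}$, split the oscillation of $\frac{g\circ f^{-1}}{f'\circ f^{-1}}$ by a product rule, and use the change of variables together with the $\beta^{-1}$-contraction of the inverse branches to produce the factor $\beta^{-\alpha}$ in front of $\var_\alpha(g)$, with constants depending only on $\beta$, $c$, $\epsilon_0$ and the partition. The only bookkeeping discrepancy is that the weight's contribution $5\|g\|_\infty\var_\alpha(\frac{1}{f'})$ is not absorbed solely into the $\|g\|_1$ term as you suggest: the paper splits it via $\|g\|_\infty\le\frac{1}{\epsilon_0}\var_\alpha(g)+\|g\|_1$, so it also enters $c_1=2\beta^{-\alpha}+\frac{5c}{\epsilon_0}$ (and, relatedly, keeping the pointwise Jacobian $\frac{1}{f'(f^{-1}(y))}$ inside the change of variables, as the paper does, avoids the distortion argument you anticipate needing).
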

\begin{proof}
We have 2 steps.\\
Step 1. We show that
$$\int_{f(J)}\osc(\frac{g\circ f^{-1}}{f^\prime\circ f^{-1}}|_{f(J)},\epsilon,y)dy\leq\int_J\osc(g|_J,\beta^{-1}\epsilon,x)dx+5||g|_J||_\infty\int_J\osc(\frac{1}{f^\prime}f^{-1}|_{f(J)},\epsilon,y)dy$$
Since
$$\osc(\frac{g\circ f^{-1}}{f^\prime\circ f^{-1}}|_{f(J)},\epsilon,y)\leq||g(f^{-1}(y))||~\osc(\frac{ 1}{f^\prime\circ f^{-1}}|_{f(J)},\epsilon,y)$$
$$+\frac{1}{f^\prime(f^{-1}(y))}~\osc(g\circ f^{-1}|_{f(J)},\epsilon,y)$$
$$+2~\osc(\frac{1}{f^\prime\circ f^{-1}}|_{f(J)},\epsilon,y)~\osc(g\circ f^{-1}|_{f(J)},\epsilon,y)$$
Over the second term, by change of variable, we have 
$$\int_{f(J)}\osc(g\circ f^{-1}|_{f(J)},\epsilon,y)~f^{-1}(y)~dy$$
$$\leq\int_J\osc(g|_J,\beta^{-1}\epsilon,x)~dx.$$
We conclude observing that
$$\int_{f(J)}\osc(\frac{ f^{-1}}{f^\prime\circ f^{-1}}|_{f(J)},\epsilon,y)~[~||g(f^{-1}(y))||+\osc(g\circ f^{-1}|_{f(J)},\epsilon,y)~]~dy$$
$$\leq 5~||g|_J||_\infty\int_J\osc(\frac{1}{f^\prime}f^{-1}|_{f(J)},\epsilon,y)dy$$
Step 2. By the last lemma, we have
$$\osc(Tg,\epsilon)\leq\sum_{j=1}^N \osc(\frac{g\circ f^{-1}}{f^\prime\circ f^{-1}}1_{f(I_j)},\epsilon)$$
$$\leq\sum_{j=1}^N2\int_{f(I_j)}\osc(\frac{g\circ f^{-1}}{f^\prime\circ f^{-1}}1_{f(I_j)},\epsilon)~dy+\frac{4\epsilon}{|I_j|}\int_{I_j}|g(x)|dx$$
$$\leq 2\int_{I}\osc(g,\beta^{-1}\epsilon,x)dx+5||g||_\infty\int_{I}\osc(\frac{1}{f^\prime}f^{-1},\epsilon,y)dy+\frac{4\epsilon}{|I_m|}\int_{I}|g(x)|dx,$$
where $|I_m|\leq|I_j|$, for all $j$. Then
$$\frac{\osc(Tg,\epsilon)}{\epsilon^{\alpha}}\leq 2\beta^{-\alpha}\var_\alpha(g)+5~||g||_\infty\var_\alpha(\frac{1}{f^\prime})+\frac{4}{|I_m|}\epsilon_0^{1-\alpha}\int_{I}|g(x)|dx$$
$$\leq 2\beta^{-\alpha}\var_\alpha(g)+\frac{5c}{\epsilon_0}\var_\alpha(g)+5c\int|g(x)|dx+\frac{4}{|I_m|}\epsilon_0^{1-\alpha}||g||_1,$$
since $||g||_\infty\leq\frac{1}{\epsilon_0}\var_\alpha(g)+\int|g(x)|dx$.\\
Take $c_1=2\beta^{-\alpha}+\frac{5c}{\epsilon_0}$ and $c_2=5c+\frac{4}{|I_m|}\epsilon_0^{1-\alpha}$.
\end{proof}
\subsection{The Lasota-Yorke inequality}
The next proposition, as a direct consequence of the last proposition, describes a rather exponential behavior of a transform operator.
\begin{prop}
There exist uniform positive constants $c_3,c_4$ and $\lambda\in(0,1)$ such that for any $g\in \mathcal B \mathcal V$ 
$$||T^ng||_{\alpha,1}\leq c_3\lambda^n||g||_{\alpha,1}+c_4||g||_1,$$
for all $n\in\mathbb N$.
\end{prop}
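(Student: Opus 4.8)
The plan is to derive this Lasota--Yorke inequality for iterates directly from the single-step variation estimate of the previous proposition by iterating it and summing a geometric series. First I would apply the previous proposition to $T^n g$ written as $T(T^{n-1}g)$, obtaining
$$\var_\alpha(T^n g)\leq c_1\,\var_\alpha(T^{n-1}g)+c_2\,\|T^{n-1}g\|_1.$$
The essential observation is that $T$ is a transfer (Perron--Frobenius) operator, so it preserves the integral and is a contraction on $L^1$: for every $g$ one has $\|Tg\|_1\leq\|g\|_1$, hence $\|T^{n-1}g\|_1\leq\|g\|_1$ for all $n$. Substituting this into the recursion yields the linear recurrence $\var_\alpha(T^n g)\leq c_1\,\var_\alpha(T^{n-1}g)+c_2\,\|g\|_1$, which I would unfold to get
$$\var_\alpha(T^n g)\leq c_1^n\,\var_\alpha(g)+c_2\,\|g\|_1\sum_{k=0}^{n-1}c_1^k.$$

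The obstacle is that the constant $c_1=2\beta^{-\alpha}+5c/\epsilon_0$ produced by the previous proposition need not be less than $1$, so $c_1^n$ does not decay and the geometric sum $\sum_{k=0}^{n-1}c_1^k$ can grow. This is the standard difficulty in establishing a Lasota--Yorke inequality, and the standard remedy is to pass to a high iterate of $f$ before collapsing. Concretely, I would first prove the one-step estimate of the previous proposition with $f$ replaced by $f^N$ for a fixed large $N$: because $(f^N)'\geq\beta^N$, the leading coefficient becomes $2\beta^{-N\alpha}+5c_N/\epsilon_0$, where the role of the uniform bound $c$ on $\var_{1/\alpha}(1/f')$ is played by the corresponding quantity for $f^N$. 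Using the chain rule $1/(f^N)'=\prod_{i=0}^{N-1}(1/f')\circ f^i$ together with the uniform expansion $\beta>1$ and the uniform bound on $\var_\alpha(1/f')$, one controls $\var_\alpha(1/(f^N)')$ uniformly in $N$ up to a geometric factor, so the term $2\beta^{-N\alpha}$ can be driven strictly below $1$ by choosing $N$ large, and the remaining contribution can be absorbed. Thus for a suitable fixed $N$ there are uniform constants $\lambda\in(0,1)$ and $b>0$ with
$$\var_\alpha(T^N g)\leq \lambda\,\var_\alpha(g)+b\,\|g\|_1.$$

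Having secured a genuine contraction after $N$ steps, I would iterate this block estimate. Writing $n=qN+r$ with $0\leq r<N$ and applying the block inequality $q$ times, again using $\|T^m g\|_1\leq\|g\|_1$ throughout, gives
$$\var_\alpha(T^{qN}g)\leq \lambda^{q}\,\var_\alpha(g)+b\,\|g\|_1\,\frac{1}{1-\lambda}.$$
For the leftover $r$ iterates I would apply the plain single-step proposition $r<N$ times, which only multiplies $\var_\alpha$ by the fixed constant $\max\{1,c_1\}^{N}$ and adds a further multiple of $\|g\|_1$; since $r$ ranges over a finite set these extra factors are uniformly bounded. Collecting terms and recombining $\var_\alpha$ with $\|\cdot\|_1$ to recover the norm $\|\cdot\|_{\alpha,1}$, one obtains constants $c_3,c_4>0$ and $\lambda\in(0,1)$, all uniform in the dynamics because every ingredient ($\beta$, $c$, $\epsilon_0$, $|I_m|$, and the number of branches $N$) is uniform by the hypotheses on the perturbed maps $f_\epsilon$, so that
$$\|T^n g\|_{\alpha,1}\leq c_3\lambda^{n}\,\|g\|_{\alpha,1}+c_4\,\|g\|_1$$
for all $n\in\mathbb{N}$, as claimed. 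The main point requiring care is the uniform control of $\var_\alpha(1/(f^N)')$ via the chain rule, since this is where the uniform constants of the perturbed family must be shown to survive iteration.
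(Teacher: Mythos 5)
Your proposal is correct and is essentially the paper's own argument: the paper's Step 1 likewise passes to a high iterate (using $T^k_f=T_{f^k}$ and the chain-rule/expansion bound $\var(\tfrac{1}{(f^k)'})\leq \tfrac{k}{\beta^{k-1}}\var(\tfrac{1}{f'})$ that you describe) to obtain a block coefficient $\hat\lambda=2\beta^{-k\alpha}+\tfrac{5kc}{\epsilon_0\beta^{k-1}}<1$. Its Steps 2 and 3 then handle the leftover iterates and the decomposition $n=mk+r$ exactly as you do, using $\|T^rg\|_1\leq\|g\|_1$ and setting $\lambda=\hat\lambda^{1/k}$.
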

\begin{proof}
We have 3 steps\\
Step 1. We prove that there are $k\in\mathbb N$, a uniform constant $c_5>0$ and $\hat\lambda\in[0,1]$ such that for any $g\in \mathcal B \mathcal V$,
$$||T^kg||_{\alpha,1}\leq \hat\lambda ||g||_{\alpha,1}+c_5||g||_1.$$
There exists $k\in\mathbb N$ such that $\hat\lambda=2\beta^{-k\alpha}+\frac{5kc}{\epsilon_0\beta^{k-1}}<1$ then, by last proposition,
$$||T^kg||_{\alpha,1}=\var_\alpha(T^kg)+||T^kg||_1\leq (2\beta^{-k\alpha}+\frac{5kc}{\epsilon_0\beta^{k-1}})\var_\alpha(g)+(\frac{5kc}{\beta^{k-1}}+\frac{4\epsilon_0^{1-\alpha}}{|I_{m,k}|})||g||_1,$$
since $T^k_f=T_{f^k}$ and $var(\frac{1}{(f^k)'})\leq\frac{k}{\beta^{k-1}}\var(\frac{1}{f'})$.\\
Take $c_5=\frac{5kc}{\beta^{k-1}}+\frac{4\epsilon_0^{1-\alpha}}{|I_{m,k}|}$.
\\\\
Step 2. Let $1\leq r\leq k$, then, by the last proposition, we have
$$\var_\alpha(T^rg)\leq c_1 \var_\alpha(T^{r-1}g)+c_2||T^{r-1}g||_1\leq ...$$
$$...\leq c_1^r \var_\alpha(g)+(c_2\sum_{j=0}^{r}c_1^j) ||g||_1$$
$$\leq \hat\lambda^{r-k}c_1^r \var_\alpha(g)+(c_2\sum_{j=0}^{r}c_1^j) ||g||_1$$
Since $\|T^rg\|_1\leq \|g\|_1$, we see that 
$$||T^rg||_{\alpha,1}\leq (\hat\lambda^{-k}c_1^r)\hat\lambda^{r}||g||_{\alpha,1}+ (c_2\sum_{j=0}^{r}c_1^j) ||g||_1.$$
Step 3. If $n>k$ then one may take $n=mk+r$ for some $m\in\mathbb N$ and $0<r< k$. As we have
$$||T^n g||_{1,\alpha}=||(T^k)^m\circ T^rg||_{1,\alpha}\leq\hat\lambda||(T^k)^{m-1}\circ T^rg||_{1,\alpha}+c_5||(T^k)^{m-1}\circ T^rg||_1\leq...$$
$$...\leq\hat\lambda^m||T^rg||_{1,\alpha}+c_5\sum_{j=0}^m\hat\lambda^j||T^rg||_1$$
$$\leq(\hat\lambda^{r-k}c_1^r)\hat\lambda^{m} ||g||_{1,\alpha}+ [\hat\lambda^m(c_2\sum_{j=0}^{r}c_1^j)+c_5\sum_{j=0}^m\hat\lambda^j]~||g||_1$$
$$\leq(\hat\lambda^{r-k-\frac{r}{k}}c_1^r)(\hat\lambda^{\frac{1}{k}})^n||g||_{1,\alpha}+ [\hat\lambda^m(c_2\sum_{j=0}^{r}c_1^j)+c_5\sum_{j=0}^m\hat\lambda^j]~||g||_1.$$
Take $\lambda=\hat\lambda^\frac{1}{k}$, $c_3=\max\{\hat\lambda^{r-k-\frac{r}{k}}c_1^r,\hat\lambda^{-k}c_1^r\}=\hat\lambda^{-k}c_1^k$ and $c_4=\max\{c_2\sum_{j=0}^{r}c_1^j,\hat\lambda^m(c_2\sum_{j=0}^{r}c_1^j)+c_5\sum_{j=0}^m\hat\lambda^j\}\leq c_2\frac{c_1^{k+1}-1}{c_1-1}+c_5\frac{1}{1-\hat\lambda}$.

\end{proof}
\subsection{Continuity of the transform operators}
we define
$$|||T_{\epsilon}|||:=\sup_{||f||_{1,\alpha}\leq 1}||T_{\epsilon} f||_1$$
\begin{lem}
If $g\in \mathcal{BV}_\alpha$ and if $\mathcal{I}=\{I_1, I_2,\ldots, I_n\}$ is a partition of subintervals of the interval $I=[0,1]$, then
$$||g-g_n||_1\leq\epsilon^\alpha\var_\alpha(g),$$
where $g_n=\sum\limits_{j=1}^{n}\left(\frac{1}{|I_j|}\int_{I_j}g(x)dx\right)~~\mathbb{I}_{I_j}$ and $\epsilon=\max\limits_{j}|I_j|\leq\epsilon_0$. 

\end{lem}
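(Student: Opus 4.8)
The plan is to bound the $L^1$ distance from $g$ to its piecewise-constant averaging $g_n$ directly by the oscillation functional $\osc(g,\epsilon)$, and then to invoke the definition of $\var_\alpha$ to convert this into the claimed $\epsilon^\alpha$ bound. Since $g_n$ is constant on each $I_j$ and equal there to the mean value of $g$ over $I_j$, the natural idea is to express the error on each piece as a deviation from the mean and estimate it by a double average of the pointwise fluctuation $|g(x)-g(t)|$ with $x,t\in I_j$.

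First I would split the norm over the partition,
\begin{equation*}
\|g-g_n\|_1=\sum_{j=1}^{n}\int_{I_j}\left|g(x)-\frac{1}{|I_j|}\int_{I_j}g(t)\,dt\right|\,dx,
\end{equation*}
and then, writing the constant as the average and applying the triangle inequality under the integral, use
\begin{equation*}
\left|g(x)-\frac{1}{|I_j|}\int_{I_j}g(t)\,dt\right|\leq\frac{1}{|I_j|}\int_{I_j}|g(x)-g(t)|\,dt.
\end{equation*}
The key observation is that for $x,t\in I_j$ one has $|x-t|\leq|I_j|\leq\epsilon$, so both points lie in $B_\epsilon(x)$; hence $|g(x)-g(t)|\leq\osc(g,\epsilon,x)$ for almost every such pair. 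Substituting this bound and integrating out the $t$ variable (which contributes a factor $|I_j|$ cancelling the prefactor) collapses each piece to $\int_{I_j}\osc(g,\epsilon,x)\,dx$.

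Summing over $j$ recombines the integrals into $\int_I\osc(g,\epsilon,x)\,dx=\osc(g,\epsilon)$, and finally the definition $\var_\alpha(g)=\sup_{0<\epsilon\leq\epsilon_0}\osc(g,\epsilon)/\epsilon^\alpha$ yields $\osc(g,\epsilon)\leq\epsilon^\alpha\var_\alpha(g)$, which is precisely the desired inequality. The only delicate point is the step $|g(x)-g(t)|\leq\osc(g,\epsilon,x)$: because the oscillation is defined through an essential supremum, this holds only for almost every pair $(x,t)$, so I would phrase the estimate as valid a.e.\ and remark that the boundary case $|x-t|=\epsilon$ lies in a null set and does not affect the integrals. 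Everything else is a routine application of Tonelli's theorem and the triangle inequality, and I would emphasize that the resulting constant is $1$ in front of $\osc(g,\epsilon)$, so no constant here depends on $f$ or on the partition beyond its mesh $\epsilon$, in keeping with the uniformity required throughout.
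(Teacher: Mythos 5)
Your proof is correct and is essentially the paper's own argument: both bound the deviation of $g$ from its mean on each $I_j$ by $\osc(g,\epsilon,x)$, sum over the partition to get $\int_I\osc(g,\epsilon,x)\,dx=\osc(g,\epsilon)$, and finish with the definition of $\var_\alpha$. The only difference is cosmetic (you push the absolute value inside the average via the triangle inequality before invoking the oscillation bound, and you flag the almost-everywhere caveat explicitly), so there is nothing to change.
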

\begin{proof}
	For every $x$ and $y$ in subinterval $I_j$, we have $$|g(x)-g(y)|\leq \osc(g,x,|I_j|)\leq \osc(g,x,\epsilon).$$
	By integrating the above inequality with respect to $y$, it holds that
	
	$$\left|g(x)-\frac{1}{|I_j|}\int_{I_j}g(y)~dy\right|\leq\osc(g,x,\epsilon).$$
	From this we conclude that
	\begin{align*}
	\|g-g_n\|_1&\leq \sum\limits_{j=1}^n\int_{I_j}\left|g(x)-\frac{1}{|I_j|}\int_{I_j}g(y)~dy\right|~dx\\&\leq \sum\limits_{j=1}^n\int_{I_j}\osc(g,x,\epsilon)~dx=\int\osc(g,x,\epsilon)~dx\\&\leq \epsilon^\alpha var_\alpha(g).
	\end{align*}
	
\end{proof}

\begin{lem}
Let $g\in \mathcal{BV}_\alpha$ and $\phi\in L_{\infty}$. There exists a constant $C$ such that for any $\epsilon\in(0,\epsilon_0]$, the following inequality holds,
$$\left|\int g\phi~dx\right|\leq \epsilon^\alpha \|g\|_{\alpha,1}\|\phi\|_\infty+\frac{C}{\epsilon^{1-\alpha}}\|g\|_{\alpha,1}\|\Phi\|_\infty,$$
where $\Phi(z)=\int_{x\leq z}\phi(x)~dx$.
\end{lem}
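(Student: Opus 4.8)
The plan is to test $g$ against $\phi$ by replacing $g$ with its piecewise constant conditional average and then transferring the $\epsilon$-cost onto a summation by parts. Fix $\epsilon\in(0,\epsilon_0]$ and partition $I=[0,1]$ into $n=\lceil 2/\epsilon\rceil$ equal subintervals $I_j=[x_{j-1},x_j]$, $0=x_0<\dots<x_n=1$, so that $|I_j|=1/n\le\epsilon/2$. Write $c_j=\frac{1}{|I_j|}\int_{I_j}g$ and $g_n=\sum_{j=1}^n c_j\,\mathbb I_{I_j}$. The previous lemma gives $\|g-g_n\|_1\le(\epsilon/2)^\alpha\var_\alpha(g)\le\epsilon^\alpha\var_\alpha(g)$, so splitting $\int g\phi=\int(g-g_n)\phi+\int g_n\phi$ the first integral is at once bounded by $\|g-g_n\|_1\|\phi\|_\infty\le\epsilon^\alpha\|g\|_{\alpha,1}\|\phi\|_\infty$, which is precisely the first term on the right.

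For the second integral I would use $\int_{I_j}\phi=\Phi(x_j)-\Phi(x_{j-1})$ and summation by parts. Since $\Phi(x_0)=\Phi(0)=0$,
\begin{align*}
\int g_n\phi\,dx=\sum_{j=1}^n c_j\bigl(\Phi(x_j)-\Phi(x_{j-1})\bigr)=c_n\Phi(1)+\sum_{j=1}^{n-1}(c_j-c_{j+1})\Phi(x_j).
\end{align*}
Bounding each $|\Phi(x_j)|\le\|\Phi\|_\infty$ and $|c_n|\le\|g\|_\infty$ reduces everything to a single genuinely substantial estimate, the control of $\sum_{j=1}^{n-1}|c_j-c_{j+1}|$, which is where the exponent $1-\alpha$ must appear.

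To produce it I would compare neighbouring averages by writing $c_j-c_{j+1}=\frac{1}{|I_j||I_{j+1}|}\int_{I_j}\int_{I_{j+1}}(g(y)-g(z))\,dz\,dy$. For $y\in I_j$, $z\in I_{j+1}$ one has $|y-z|\le|I_j|+|I_{j+1}|\le\epsilon$, hence $z\in B_\epsilon(y)$ and $|g(y)-g(z)|\le\osc(g,\epsilon,y)$ for almost every pair; integrating first in $z$ yields $|c_j-c_{j+1}|\le\frac{1}{|I_j|}\int_{I_j}\osc(g,\epsilon,y)\,dy$. Since the $I_j$ are disjoint with $1/|I_j|=n\le 4/\epsilon$, the integrals combine into one integral over $I$,
\begin{align*}
\sum_{j=1}^{n-1}|c_j-c_{j+1}|\le\frac{4}{\epsilon}\int_I\osc(g,\epsilon,y)\,dy=\frac{4}{\epsilon}\,\osc(g,\epsilon)\le\frac{4}{\epsilon^{1-\alpha}}\var_\alpha(g),
\end{align*}
the last inequality being the definition of $\var_\alpha$. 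This is the heart of the matter: the factor $1/\epsilon$ counting the $\asymp 1/\epsilon$ pieces meets the $\epsilon^\alpha$ coming from $\var_\alpha$ to give exactly $\epsilon^{-(1-\alpha)}$, and I expect this estimate of $\sum|c_j-c_{j+1}|$ to be the main obstacle while the rest is routine.

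Finally I would collect the pieces. Using $|c_n|\le\|g\|_\infty\le\frac{1}{\epsilon_0}\var_\alpha(g)+\|g\|_1\le C'\|g\|_{\alpha,1}$ (the bound on $\|\cdot\|_\infty$ already invoked earlier) together with $\var_\alpha(g)\le\|g\|_{\alpha,1}$, the second integral is at most $\bigl(C'+\tfrac{4}{\epsilon^{1-\alpha}}\bigr)\|g\|_{\alpha,1}\|\Phi\|_\infty$; since $\epsilon\le\epsilon_0$ gives $\epsilon^{-(1-\alpha)}\ge\epsilon_0^{-(1-\alpha)}$, the $\epsilon$-independent constant $C'$ is absorbed into a single multiple of $\epsilon^{-(1-\alpha)}$, say $C=4+C'\epsilon_0^{1-\alpha}$. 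Combining this with the first term yields $\bigl|\int g\phi\bigr|\le\epsilon^\alpha\|g\|_{\alpha,1}\|\phi\|_\infty+\frac{C}{\epsilon^{1-\alpha}}\|g\|_{\alpha,1}\|\Phi\|_\infty$, as claimed.
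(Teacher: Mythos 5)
Your proof is correct and follows essentially the same route as the paper's: replace $g$ by its piecewise-constant conditional averages on a partition of mesh comparable to $\epsilon$, bound $\|g-g_n\|_1$ by the preceding lemma, then apply summation by parts against $\Phi$ and control the total variation $\sum_j|c_j-c_{j+1}|$ of adjacent averages by the oscillation, which produces the $\epsilon^{-(1-\alpha)}$ factor. The only cosmetic differences are that you use equal-length subintervals where the paper allows merely comparable lengths, and you estimate $|c_j-c_{j+1}|$ via the double-integral representation $\frac{1}{|I_j||I_{j+1}|}\int_{I_j}\int_{I_{j+1}}(g(y)-g(z))\,dz\,dy$ instead of comparing both averages to the average over the union $I_j\cup I_{j+1}$.
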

\begin{proof}
Take $I_1=(a_1,b_1),\dots, I_N=(a_n,b_n)$ be a partition of $I$ into sub intervals such that $|I_j|\leq\epsilon_0$ and $\frac{|I_j|}{|I_k|}\leq c$, for some $c>1$, and any $j,k$.  
Let $\bar g_j=\frac{1}{|I_j|}\int_{I_j}g(x)dx$, $\epsilon=\max\limits_{j}|I_j|$ and $\delta=\min\limits_{j}|I_j|$. The triangle and H\"older inequalities imply that
\begin{align*}
\left|\int g\phi~dx\right|\leq\left|\int g\phi~dx-\int g_n\phi~dx\right|+\left|\int g_n\phi~dx\right|\leq \|g-g_n\|_1\|\phi\|_{\infty}+\left|\int g_n\phi~dx\right|
\end{align*}
On the other hand
\begin{align*}
\left|\int g_n\phi(x)~dx\right|&=\left|\sum_{j=1}^{n}\int_{I_j} g_n\phi(x)~dx\right|=\left|\sum_{j=1}^{n}\bar g_j\int_{I_j}\phi(x)~dx\right|=\left|\sum_{j=1}^{n}\bar g_j\left(\Phi(b_j)-\Phi(a_j)\right)\right|\\
&\leq |\Phi(0)~\bar g_1|+|\Phi(1)~\bar g_n|+\sum_{j=2}^{n}|\bar g_{j}-\bar g_{j-1}|~||\Phi||_{\infty}\quad (\Phi(0)=0)\\
&\leq\|\Phi\|_{\infty}~\|g\|_\infty+\|\Phi\|_{\infty}\frac{4c2^\alpha}{\epsilon^{1-\alpha}}\var_{\alpha}(g)\\
&\leq \|\Phi\|_{\infty}\left(\frac{1}{\epsilon_0}+\frac{4c2^\alpha}{\epsilon^{1-\alpha}}\right)\|g\|_{\alpha,1}\quad \left(\|g\|_\infty\leq\frac{1}{\epsilon_0}\var_\alpha(g)+\|g\|_1\leq \frac{1}{\epsilon_0}\|g\|_{\alpha,1}\right)\\
&\leq \frac{C}{\epsilon^{1-\alpha}}\|\Phi\|_{\infty}\|g\|_{\alpha,1}
\end{align*}
Indeed, for any $j$,
\begin{align*}
|\bar g_{j}-\bar g_{j-1}|&\leq|\bar g_{j}-\bar g_{j,j-1}|+|\bar g_{j,j-1}-\bar g_{j-1}|\quad \left(\bar g_{j,j-1}=\frac{1}{|I_j|+|I_{j-1}|}\int_{I_j\cup I_{j-1}}g(x)~dx\right)\\
&=\frac{1}{|I_j|}\left|\int_{I_j}g(x)-\bar g_{j,j-1}~dx\right|+\frac{1}{|I_{j-1}|}\left|\int_{I_{j-1}}g(x)-\bar g_{j,j-1}~dx\right|\\
&\leq \frac{1}{|I_j|}\int_{I_j\cup I_{j-1}}|g(x)-\bar g_{j,j-1}|~dx+\frac{1}{|I_{j-1}|}\int_{I_j\cup I_{j-1}}|g(x)-\bar g_{j,j-1}|~dx\\
&\leq \frac{2}{\delta}\int_{I_j\cup I_{j-1}}\osc(g,x,2\epsilon)dx.
\end{align*}
Then
\begin{align*}
\sum_{j=2}^n |\bar g_{j}-\bar g_{j-1}|&\leq \frac{4}{\delta}\int\osc(g,x,2\epsilon)dx\leq \frac{4}{\delta}(2\epsilon)^{\alpha}\var_{\alpha}(g)\leq \frac{4c2^\alpha}{\epsilon^{1-\alpha}}\var_{\alpha}(g)
\end{align*}

\end{proof}
\begin{defin}
	The distance between $f_1$ and $f_2$, denoted by $d(f_1,f_2)$, is defined to be the infimum of $\epsilon > 0$ for which there exist a subset $A\subset I$ with $|A| > 1 - \epsilon$ and a diffeomorphism $\sigma :I\to I$ such  that $f_2|_A = f_1\circ \sigma|_A$, and
    for all $x\in A,~~ |\sigma(x) - x|< \epsilon$ and $|\frac{1}{\sigma'(x)} - I| < \epsilon$.
\end{defin}
\begin{prop}
$$\lim_{\epsilon\rightarrow 0}|||T_\epsilon-T_0|||=0$$
\end{prop}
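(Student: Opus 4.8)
The plan is to combine the adjoint (duality) relation for the transfer operator with the preceding lemma bounding $\left|\int g\phi\,dx\right|$. For any $g\in L^1$ and $\phi\in L^\infty$, the change of variables over the smoothness branches of $f_\epsilon$ gives the adjoint identity $\int (T_\epsilon g)\,\phi\, dx = \int g\,(\phi\circ f_\epsilon)\, dx$, and the same for $f_0$; subtracting yields
$$\int (T_\epsilon g - T_0 g)\,\phi\, dx = \int g\,\bigl(\phi\circ f_\epsilon - \phi\circ f_0\bigr)\, dx.$$
Since $\|h\|_1=\sup_{\|\phi\|_\infty\le 1}\left|\int h\phi\,dx\right|$, it suffices to bound the right-hand side, uniformly over $\|g\|_{\alpha,1}\le 1$ and $\|\phi\|_\infty\le 1$, by a quantity that tends to $0$ as $\epsilon\to 0$.

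Write $\psi:=\phi\circ f_\epsilon-\phi\circ f_0$ and $\Psi(z):=\int_0^z\psi(x)\,dx$. The idea is to feed the pair $(g,\psi)$ into the preceding lemma, which provides, for every free parameter $\eta\in(0,\epsilon_0]$,
$$\left|\int g\psi\, dx\right|\le \eta^\alpha\,\|g\|_{\alpha,1}\,\|\psi\|_\infty+\frac{C}{\eta^{1-\alpha}}\,\|g\|_{\alpha,1}\,\|\Psi\|_\infty.$$
The bound $\|\psi\|_\infty\le 2\|\phi\|_\infty\le 2$ is immediate, so everything reduces to showing that the primitive $\Psi$ is uniformly small, namely $\|\Psi\|_\infty=O\bigl(d(f_\epsilon,f_0)\bigr)$. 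Granting this and choosing $\eta=d(f_\epsilon,f_0)$ (permissible once $\epsilon$ is small enough that $d(f_\epsilon,f_0)\le\epsilon_0$), both terms on the right become $O\bigl(d(f_\epsilon,f_0)^\alpha\bigr)$. Taking the supremum over $g$ and $\phi$ then gives $|||T_\epsilon-T_0|||\le C'\,d(f_\epsilon,f_0)^\alpha$, which tends to $0$ because the perturbation being $C^2$-small forces $d(f_\epsilon,f_0)\to 0$.

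It remains to estimate $\|\Psi\|_\infty$, and this is where the distance $d$ is used. Fix $z\in I$, put $\delta=d(f_\epsilon,f_0)$, and take $A\subset I$ with $|A|>1-\delta$ together with the near-identity diffeomorphism $\sigma$ from the definition, so that $f_\epsilon=f_0\circ\sigma$ on $A$. Splitting $[0,z]$ into $[0,z]\cap A$ and its complement, the contribution of $[0,z]\setminus A$ to $\int_0^z\phi\circ f_\epsilon\, dx$ is at most $|I\setminus A|\le\delta$ in absolute value. On $[0,z]\cap A$ one replaces $\phi\circ f_\epsilon$ by $\phi\circ f_0\circ\sigma$ and changes variables $u=\sigma(x)$: the Jacobian factor $1/\sigma'(\sigma^{-1}(u))$ costs only $O(\delta)$ because $\left|1/\sigma'-1\right|<\delta$, while the image $\sigma([0,z]\cap A)$ differs from $[0,z]$ by a set of measure $O(\delta)$ because $|\sigma(x)-x|<\delta$ and $|A^c|\le\delta$. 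Collecting these three errors gives $\int_0^z\phi\circ f_\epsilon\, dx=\int_0^z\phi\circ f_0\, dx+O(\delta)$ uniformly in $z$, that is $\|\Psi\|_\infty=O(\delta)$.

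I expect the main obstacle to be precisely this last estimate: quantifying that each of the three error sources — the exceptional set $I\setminus A$, the distortion $1/\sigma'-1$, and the displacement $\sigma(x)-x$ — contributes only $O(\delta)$ to $\Psi(z)$, uniformly in the endpoint $z$. The reason for passing through the primitive $\Psi$ and the bounded-variation lemma is that the compositional difference $\phi\circ f_\epsilon-\phi\circ f_0$ is not small in $L^\infty$; it becomes small only after integration against a function of bounded $\frac{1}{\alpha}$-variation, which is exactly what the mechanism above exploits.
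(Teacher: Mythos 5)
Your proposal is correct and follows essentially the same route as the paper: the paper likewise reduces to the duality pairing (using the specific sign function $H=\frac{|T_\epsilon g-T_0g|}{T_\epsilon g-T_0g}$ in place of a supremum over $\phi$ with $\|\phi\|_\infty\le 1$), applies the preceding lemma to $H\circ f_\epsilon-H\circ f_0$ and its primitive, and bounds that primitive by splitting into $A$ and its complement and changing variables by the diffeomorphism $\sigma$ from the definition of $d(f_\epsilon,f_0)$. Your write-up is in fact slightly more careful than the paper's in separating the lemma's free parameter $\eta$ from the distance $d(f_\epsilon,f_0)$, which the paper conflates with the perturbation size $\epsilon$ (justified only by its subsequent remark that $d(f_\epsilon,f_0)=o(\epsilon)$).
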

\begin{proof}
For any $g\in \mathcal B\mathcal V$ and let $H=\frac{|T_\epsilon g-T_0g|}{T_\epsilon g-T_0g}$ we have
\begin{align*}
\int|T_\epsilon g-T_0g|~dx&=\int H~(T_\epsilon g-T_0g)~dx=\int g~(H\circ f_\epsilon ~-~H\circ f_0)~dx\\
&\leq \epsilon^\alpha\|g\|_{\alpha,1}\|H\circ f_\epsilon ~-~H\circ f_0\|_{\infty}+\frac{C}{\epsilon^{1-\alpha}}\|g\|_{\alpha,1}\sup_z\left|\int_0^z(H\circ f_\epsilon~-~H\circ f_0)~dx\right|\\
&\leq 2\epsilon^\alpha\|g\|_{\alpha,1}+\frac{C}{\epsilon^{1-\alpha}}\|g\|_{\alpha,1}\sup_z\left(2\epsilon+\left|\int_{A\cap [0,z]}(H\circ f_\epsilon\circ\sigma~-~H\circ f_0)~dx\right|\right)\\
&\leq 2\epsilon^\alpha\|g\|_{\alpha,1}+\frac{C}{\epsilon^{1-\alpha}}\|g\|_{\alpha,1}(2\epsilon+\epsilon+\epsilon)\leq (2+4C)\epsilon^\alpha\|g\|_{\alpha,1}.
\end{align*}

\end{proof}

\begin{rem}
Note that $d(f_\epsilon,f_0)=o(\epsilon)$.
\end{rem}
\subsection{Spectral Stability}
To conclude the proof of statistical stability for one dimensional dynamics $f$ of a singular hyperbolic flow, we need to show that the densities of the unique probability measures are close, as follows.
\begin{prop}
$$\lim_{\epsilon\rightarrow 0}||h_\epsilon-h_0||_1=0$$
where $h_\epsilon$ is the density of the invariant measure for $f_\epsilon$.
\end{prop}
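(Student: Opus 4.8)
The plan is to combine the three ingredients already assembled: the uniform Lasota--Yorke inequality, the fact that each $T_\epsilon$ is an $L^1$-contraction, and the triple-norm continuity $|||T_\epsilon-T_0|||\to 0$. First I would record a uniform a priori bound on the densities. Since $h_\epsilon=T_\epsilon^n h_\epsilon$ for every $n$, choosing a fixed $N$ with $c_3\lambda^N\le \tfrac12$ (possible because $c_3$ and $\lambda$ are uniform) and applying the Lasota--Yorke inequality of the preceding proposition gives $\|h_\epsilon\|_{\alpha,1}\le \tfrac12\|h_\epsilon\|_{\alpha,1}+c_4\|h_\epsilon\|_1=\tfrac12\|h_\epsilon\|_{\alpha,1}+c_4$, so that $M:=\sup_\epsilon\|h_\epsilon\|_{\alpha,1}\le 2c_4<\infty$.

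Next, for each $n$ I would write
$$h_\epsilon-h_0=T_\epsilon^n h_\epsilon-T_0^n h_0=(T_\epsilon^n-T_0^n)h_\epsilon+T_0^n(h_\epsilon-h_0).$$
For the first term I use the telescoping identity $T_\epsilon^n-T_0^n=\sum_{k=0}^{n-1}T_0^{k}(T_\epsilon-T_0)T_\epsilon^{n-1-k}$. Since $T_\epsilon^{n-1-k}h_\epsilon=h_\epsilon$, since each $T_0$ is an $L^1$-contraction, and since $\|(T_\epsilon-T_0)h_\epsilon\|_1\le |||T_\epsilon-T_0|||\,\|h_\epsilon\|_{\alpha,1}$, each of the $n$ summands is at most $M\,|||T_\epsilon-T_0|||$, whence $\|(T_\epsilon^n-T_0^n)h_\epsilon\|_1\le nM\,|||T_\epsilon-T_0|||$.

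For the second term I would invoke the spectral gap of the unperturbed operator. The uniform Lasota--Yorke inequality together with the compact embedding $\mathcal{BV}\hookrightarrow L^1$ makes $T_0$ quasi-compact (Ionescu--Tulcea--Marinescu, or Hennion), so its peripheral spectrum is finite dimensional; uniqueness of the absolutely continuous invariant measure forces $1$ to be a simple eigenvalue with eigenfunction $h_0$, and, once all other eigenvalues on the unit circle are excluded, $T_0=P_0+N$ with $P_0 g=\left(\int g\,dx\right)h_0$ and $\|N^n\|_{\alpha,1}\le C\theta^n$ for some $\theta\in(0,1)$. Because $\int(h_\epsilon-h_0)\,dx=0$ we have $P_0(h_\epsilon-h_0)=0$, so $\|T_0^n(h_\epsilon-h_0)\|_1\le \|N^n(h_\epsilon-h_0)\|_{\alpha,1}\le 2CM\theta^n$. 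Combining the two estimates yields $\|h_\epsilon-h_0\|_1\le nM\,|||T_\epsilon-T_0|||+2CM\theta^n$; given $\eta>0$ I first fix $n$ with $2CM\theta^n<\eta/2$ and then let $\epsilon\to 0$ so that $nM\,|||T_\epsilon-T_0|||<\eta/2$, which proves the claim.

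The hard part is exactly the one place where I used the full spectral gap, namely ruling out peripheral eigenvalues other than the simple eigenvalue $1$: transitivity alone does not preclude roots of unity in the peripheral spectrum, so that step really requires aperiodicity or mixing of $f_0$. I would sidestep this by replacing the quantitative decay estimate with a soft compactness argument that needs only uniqueness. The family $\{h_\epsilon\}$ is bounded in $\mathcal{BV}$ by $M$, hence relatively compact in $L^1$; if $h_{\epsilon_j}\to h_*$ in $L^1$ along a subsequence, then $\|T_{\epsilon_j}h_{\epsilon_j}-T_0 h_*\|_1\le M\,|||T_{\epsilon_j}-T_0|||+\|h_{\epsilon_j}-h_*\|_1\to 0$, so $h_*=T_0 h_*$ is an absolutely continuous invariant probability density for $f_0$ (nonnegativity and unit mass pass to the $L^1$-limit), and uniqueness gives $h_*=h_0$. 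Since every subsequence has a further subsequence converging to $h_0$, the whole family converges, establishing $\lim_{\epsilon\to 0}\|h_\epsilon-h_0\|_1=0$.
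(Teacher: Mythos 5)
Your final argument is correct, but it takes a genuinely different route from the paper's. The paper proves the proposition in one stroke by citing the Keller--Liverani spectral perturbation theorem \cite{keller-liverani99}: the uniform Lasota--Yorke inequality together with $|||T_\epsilon-T_0|||\to 0$ yields convergence of the spectral projections $\Pi_\epsilon\to\Pi_0$ associated with the eigenvalue $1$, and since these eigenspaces are one-dimensional (uniqueness of the absolutely continuous invariant measure), $\|h_\epsilon-h_0\|_1\to 0$ follows. You instead argue by hand: the uniform Lasota--Yorke inequality gives $\sup_\epsilon\|h_\epsilon\|_{\alpha,1}\le 2c_4$, the compact embedding of $\mathcal{BV}$ into $L^1$ makes $\{h_\epsilon\}$ relatively compact in $L^1$, the estimate $\|T_{\epsilon_j}h_{\epsilon_j}-T_0h_*\|_1\le M\,|||T_{\epsilon_j}-T_0|||+\|h_{\epsilon_j}-h_*\|_1$ shows that every $L^1$-limit point is a fixed density of $T_0$, and uniqueness of the a.c.i.m.\ for $f_0$ identifies every limit point with $h_0$. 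You were also right to discard your first, quantitative attempt: transitivity gives uniqueness and ergodicity but not mixing, so roots of unity in the peripheral spectrum of $T_0$ cannot be excluded, and a spectral-gap decomposition $T_0=P_0+N$ with $\|N^n\|\le C\theta^n$ is not justified; your soft replacement needs only uniqueness, which is exactly what the paper supplies. Note that Keller--Liverani shares this virtue (it requires $1$ to be an isolated eigenvalue, not the only peripheral one) and buys strictly more, namely H\"older control of the spectral data and hence quantitative stability, whereas your compactness argument is self-contained and elementary but yields convergence without a rate. One detail worth making explicit in your write-up: applying the Lasota--Yorke inequality to $h_\epsilon$ presupposes $h_\epsilon\in\mathcal{BV}$; this is standard (the Ces\`aro averages $\frac{1}{n}\sum_{k=0}^{n-1}T_\epsilon^k\mathbf{1}$ are uniformly bounded in $\mathcal{BV}$ and the $\alpha$-variation is lower semicontinuous under $L^1$ convergence), and the paper implicitly relies on the same a priori regularity.
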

\begin{proof}
Note that by \cite{keller-liverani99}, when $\lim\limits_{\epsilon\to 0}|||T_\epsilon-T_0|||=0$, it implies that $$\lim\limits_{\epsilon\to 0}\|\Pi_{\epsilon}-\Pi_{0}\|_1=0$$ where $\Pi_\epsilon$ and $\Pi_0$ are spectral projections associated with the eigenvalue 1. Since the eigenspaces associated with the eigenvalue 1 have dimension one, the proof is completed.
\end{proof}

\bibliographystyle{alpha}
\bibliography{ref}

\def\cprime{$'$} \def\cprime{$'$}
\begin{thebibliography}{APPV09}

\bibitem[AM17]{araujo-melbourne17}
Vitor Ara\'{u}jo and Ian Melbourne.
\newblock Existence and smoothness of the stable foliation for sectional
  hyperbolic attractors.
\newblock {\em Bull. Lond. Math. Soc.}, 49(2):351--367, 2017.

\bibitem[AP07]{arroyo-pujals07}
Aubin Arroyo and Enrique~R. Pujals.
\newblock Dynamical properties of singular-hyperbolic attractors.
\newblock {\em Discrete Contin. Dyn. Syst.}, 19(1):67--87, 2007.

\bibitem[APPV09]{araujo-pacifico-pujals-viana09}
V.~Ara\'ujo, M.~J. Pacifico, E.~R. Pujals, and M.~Viana.
\newblock Singular-hyperbolic attractors are chaotic.
\newblock {\em Trans. Amer. Math. Soc.}, 361(5):2431--2485, 2009.

\bibitem[AS14]{alves-soufi14}
Jos{\'e}~F. Alves and Mohammad Soufi.
\newblock Statistical stability of geometric {L}orenz attractors.
\newblock {\em Fund. Math.}, 224(3):219--231, 2014.

\bibitem[GW79]{guckenheimer-williams79}
John Guckenheimer and R.~F. Williams.
\newblock Structural stability of {L}orenz attractors.
\newblock {\em Inst. Hautes \'Etudes Sci. Publ. Math.}, 50(1):59--72, 1979.

\bibitem[HK82]{hofbauer-keller82}
Franz Hofbauer and Gerhard Keller.
\newblock Ergodic properties of invariant measures for piecewise monotonic
  transformations.
\newblock {\em Math. Z.}, 180(1):119--140, 1982.

\bibitem[Kel85]{keller85}
Gerhard Keller.
\newblock Generalized bounded variation and applications to piecewise monotonic
  transformations.
\newblock {\em Z. Wahrsch. Verw. Gebiete}, 69(3):461--478, 1985.

\bibitem[KL99]{keller-liverani99}
Gerhard Keller and Carlangelo Liverani.
\newblock Stability of the spectrum for transfer operators.
\newblock {\em Annali della Scuola Normale Superiore di Pisa - Classe di
  Scienze}, Ser. 4, 28(1):141--152, 1999.

\bibitem[Lor63]{lorenz63}
E.~Lorenz.
\newblock Deterministic nonperiodic flow.
\newblock {\em J. Atmos. Sci.}, 20(2):130--141, 1963.

\bibitem[MPP04]{morales-pacifico-pujals04}
C.~A. Morales, M.~J. Pacifico, and E.~R. Pujals.
\newblock Robust transitive singular sets for 3-flows are partially hyperbolic
  attractors or repellers.
\newblock {\em Ann. of Math. (2)}, 160(2):375--432, 2004.

\bibitem[Wig88]{wiggins88}
Stephen Wiggins.
\newblock {\em Global bifurcations and chaos}, volume~73 of {\em Applied
  Mathematical Sciences}.
\newblock Springer-Verlag, New York, 1988.
\newblock Analytical methods.

\end{thebibliography}

\end{document}